\def\@email#1#2{%
 \endgroup
 \patchcmd{\titleblock@produce}
  {\frontmatter@RRAPformat}
  {\frontmatter@RRAPformat{\produce@RRAP{*#1\href{mailto:#2}{#2}}}\frontmatter@RRAPformat}
  {}{}
}%
\newtheorem{theorem}{Theorem}[section]
\newtheorem*{definition}{Definition}
\newtheorem{corollary}{Corollary}[section]
\newtheorem{proposition}{Proposition}[section]
\newtheorem{lemma}{Lemma}[section]
\begin{document}

\preprint{AIP/123-QED}

\title[Inductive Volume Measure: A Geometric Alternative to Hausdorff Measure]{Inductive Volume Measure: A Geometric Alternative to Hausdorff Measure}
\author{{Luis A.} {Cede\~no-P\'erez}}

\email{luisacp@ciencias.unam.mx}

\affiliation{ Instituto de Ciencias Nucleares, Universidad Nacional Aut\'onoma de M\'exico, AP  70543, Mexico City, Mexico }

\date{\today}

\begin{abstract}
We employ methods of geometry and generalized convergence to construct a geometric measure that serves as an alternative to the integer-dimension Hausdorff measure. This construction prioritizes integration, yields the Area Formula as a byproduct of the construction and the Coarea Formula follows indirectly from the smooth case. Furthermore, this is the smallest measure that satisfies the Area Formula. Though this construction is not as general as Hausdorff measure, it provides a much simpler introduction to the topic and is enough for certain applications to Geometric Analysis.
\end{abstract}

\keywords{Geometric Measure Theory, Geometric Analysis, Hausdorff Measure, Area Formula, Coarea Formula}

\pacs{MSC Primary 28A78; Secondary 58C35}

\maketitle

\tableofcontents

\section{Introduction}

\subsection{A motivating problem}

Let $f\colon \mathbb{R}^{2} \to \mathbb{R}$ be integrable in compact subsets of $\mathbb{R}^{n}$, and consider a sequence of smooth curves $\{\gamma_{n}\}_{n\in\mathbb{N}}$ such that $int\;\gamma_{n}$ is precompact and $int\;\gamma_{n+1} \subset int\;\gamma_{n}$. A natural question that arises is if the limit
\begin{equation*}
    \lim_{n\to\infty} \int_{\gamma_{n}}f
\end{equation*}
exists. Furthermore, if $\gamma$ is another closed curve such that $\gamma_{n} \to \gamma$ in an adequate sense does it follow that
\begin{equation*}
    \lim_{n\to\infty} \int_{\gamma_{n}}f = \int_{\gamma}f\,?
\end{equation*}
Questions such as this arise frequently in Analysis and Geometry as well as many other applications (see \cite{Cedeño} for an example in which this problem is relevant). One may be tempted to employ the usual techniques of measure theory, but certain problems arise immediately. For example, it is well known that there exists measures $d\gamma_{n}$ such that
\begin{equation*}
    \int f\;d\gamma_{n} = \int_{\gamma_{n}} f,
\end{equation*}
however, the measure is different for each curve $\gamma_{n}$. To solve this we may consider the sequence of measures $(d\gamma_{n})_{n\in\mathbb{N}}$ and try to establish some convergence in the sense of measures. Once again, this fails since the measures don't even converge setwise.

We may instead regard each $\gamma_{n}$ as a set and find a single measure $H_{1\leq 2}$ such that if $\gamma$ is any smooth curve in $\mathbb{R}^{2}$ then
\begin{equation}\label{EqLenghtFormula}
    \int_{\gamma}f\;dH_{1\leq 2} = \int_{\gamma} f.
\end{equation}
This would allow us to study the limit as the set changes in a way more familiar from measure theory.

In this paper we construct a new measure, which we will call the \textbf{inductive volume measure}, that satisfies generalizations of \eqref{EqLenghtFormula} known as the Area and Coarea Formulas. The most famous measure that satisfies the Area and Coarea Formulas is the Hausdorff measure, the construction of which is rather involved and the proof of its fundamental results even more so (see \cite{Federer} and \cite{Evans}). The inductive volume measure, while not equivalent to the Hausdorff measure, agrees with the Hausdorff measure in a large class of sets and is the smallest Borel measure in $\mathbb{R}^{m}$ that satisfies the Area Formula. Furthermore, the inductive volume measure can be defined just as easily in riemannian manifolds. The loss in generality of the inductive volume measure is countered by a much simpler construction that is perfectly suited for introductions to the topic.

\subsection{Hausdorff Measure}

The Hausdorff measure $H_{m\leq n}$, essentially, measures $m$-dimensional subsets of $n$-dimensional space. Grossly simplifying, Hausdorff measures have 3 uses:
\begin{enumerate}
    \item Study irregular sets (see \cite{Falconer}).
    \item Study non-integer dimension of sets (see \cite{Falconer}).
    \item Study irregular functions in regular sets (manifolds or almost-manifolds) (see \cite{Evans} and \cite{Krantz}).
\end{enumerate}
The first two tasks can be done as soon as the Hausdorff measures are constructed, regardless of integration. For example, \cite{Falconer} studies fractal sets in depth, without developing the integration formulas for Hausdorff measure, namely, the Area and Coarea Formulas. The third one, which is the most related to classical problems of Geometry and Analysis, can not be done until the Area and Coarea Formulas are developed (see \cite{Krantz} or \cite{Evans}). This implies a long learning process that can be overwhelming for those who begin their study of Geometric Measure Theory and other similar subjects.

Let's recall briefly the construction of Hausdorff measures, following \cite{Krantz}. Let $m\in\mathbb{N}$ and $\Omega_{m}$ be the $m$-dimensional volume of the unit ball centered in zero, that is,
\begin{align*}
    \Omega_{m} &= \lambda^{m}(B_{1}(0))\\
    &= \frac{2\pi^{\frac{m}{2}}}{m\Gamma(\frac{m}{2})}.
\end{align*}
For $\delta > 0$ define
\begin{equation*}
    H_{m,\delta}(A) = \inf\left\{\sum_{k=1}^{\infty}\Omega_{m}2^{-k}(diam(B_{k}))^{k}\;\bigg|\;A\subset \bigcup_{k\in\mathbb{N}}B_{k} \textrm{ and }diam(B_{k}) \leq \delta\right\}.
\end{equation*}
Due to Caratheodory's construction (see \cite{Federer} or \cite{Krantz}) it is easy to see that $H_{m,\delta}$ is an outer measure. Futhermore, one can verify that $\delta \longmapsto H_{m,\delta}(A)$ is decreasing, hence the limit
\begin{equation*}
    H_{m}(A) = \lim_{\delta\to 0}H_{m,\delta}(A)
\end{equation*}
exists, although it may be $\infty$. One then checks that $H_{m}$ is still an outer measure due to the monotonicity of $\delta \longmapsto H_{m,\delta}(A)$. One can then apply the usual theory of Caratheodory to restrict $H_{m}$ to a measure on certain $\sigma$-algebra, although is not clear on whether the Borel $\sigma$-algebra $Bo(\mathbb{R}^{n})$ is contained in it. To this end, one shows that $H_{m}$ is a metric outer measure, which means that if $A,B\subset\mathbb{R}^{n}$ are such that $d(A,B) > 0$ then
\begin{equation*}
    H_{m}(A\cup B) = H_{m}(A) + H_{m}(B).
\end{equation*}
The next step is to show that for any metric outer measure the Borel measurable sets are Caratheodory measurable. This implies that $H_{m}$ is defined in a $\sigma$-algebra that contains the Borel $\sigma$-algebra of $\mathbb{R}^{n}$.

All the previous steps are rather technical and complicated, and all this effort is only to show that the Hausdorff measure $H_{m}$ is a properly defined measure in a $\sigma$-algebra that contains the Borel sets. It remains to show how the integral with respect to $H_{m}$ is computed, which is the content of the Area and Coarea Formulas, the proof of which is even more difficult (see \cite{Krantz} and \cite{Evans}).

The advantage of the previous construction is that it is not necessary for $m$ to be a natural, but any positive real number $s$ (the normalization constants $\Omega_{m}2^{-m}$ are simply set to $1$). This allows one to define notions such as \textbf{Hausdorff dimension}. The relevant result is that for any measurable set $A$, if $H_{p}(A) < \infty$ then $H_{q}(A) = 0$ for any $q > p$, and if $H_{p}(A) > 0$ then $H_{q}(A) = \infty$ for any $q < p$. Hence the numbers
\begin{equation*}
    \inf \{p \geq 0\;|\;H_{p}(A) = 0\} \quad\text{and}\quad \sup \{p \geq 0\;|\;H_{p}(A) = \infty\}
\end{equation*}
are both finite and equal. Another advantage is that the domain of $H_{m}$ is much larger than the Borel $\sigma$-algebra, hence, one may study sets that are not Borel measurable. Other references for the topic are \cite{Federer}, \cite{Folland} and \cite{Evans}.

\subsection{Summary of the Construction}

The main ingredient is shifting our attention from the definition of the measure to the definition of the integral, thus the goal is to generate a measure that satisfies the Area and Coarea Formulas. The only requirements for the construction of the inductive volume measure are
\begin{enumerate}
    \item Differential forms.
    \item Generalized convergence, in the form of nets.
\end{enumerate}
Knowledge of differential forms is required in advanced topics of Geometric Measure Theory, as in the theory of currents (see \cite{Federer} and \cite{Krantz}), and generalized convergence is an invaluable tool in almost every area of mathematics that is vaguely related to Topology or Analysis (see \cite{Megginson} and \cite{Macheras}). Hence, knowledge of this preliminaries can be considered advantageous to those who start their journey in Geometric Measure Theory, anyway.

Let $\Omega\subset\mathbb{R}^{n}$ be an open set, $M\subset \Omega$ an $m$-dimensional submanifold and $dV_{M}$ its volume form. The volume form allows us to define the integral of continuous functions in $M$ as the integral of the differential form $f\;dV_{M}$, that is
\begin{equation*}
    \int_{M} f = \int fdV_{M}.
\end{equation*}
It turns out that this defines an element of $C_{c}(M)^{\ast}$ and the Riesz-Markov-Kakutani Theorem provides the existence of a Borel measure $\mu_{M}$ such that integration with respect to this measure is the functional, that is,
\begin{equation*}
    \int f\;d\mu_{M} = \int_{M} f.
\end{equation*}
The measure $\mu_{M}$ can be lifted to a measure on $\Omega$ by the formula
\begin{equation*}
    \tilde{\mu}_{M}(A) = \mu_{M}(A\cap M),
\end{equation*}
however, it only actually measures subsets of $M$. 

Thus, if $\mathcal{M}_{m}$ is the class of $m$-dimensional submanifolds of $\mathbb{R}^{n}$ then $(\mu_{M})_{M\in\mathcal{M}_{m}}$ resembles an increasing net of measures. This fails because the class $\mathcal{M}_{m}$ is not a directed set under inclusion, since the union of manifolds is not a manifold. The strategy is improving the class $\mathcal{M}_{m}$ so that the resulting class is a directed set. This will be done by considering finite unions of Borel subsets contained in $m$-dimensional submanifolds. The resulting net will actually be an inductive system of measures and the inductive volume measure will be its limit. The Area Formula will follow immediately from properties of inductive systems of measures. The Coarea Formula will not be as immediate, but will follow from an extension argument from the smooth case. Furthermore, we will present a new proof for the fact that every measure that satisfies the Area Formula also satisfies the Coarea Formula. We will construct the desired measure on $\mathbb{R}^{n}$ but the present results carry to the setting of riemannian manifolds with scarcely any modification, which is another advantage of this construction.

\section{Preliminaries}

In this section we collect the basic results needed for our construction. Proofs will be omitted to simplify the exposition, but can be found in the references given at the end of each subsection.

\subsection{Generalized Convergence}

A preorder in a set $I$ is a relation $\preceq$ such that
\begin{enumerate}
    \item $i\preceq i$ for any $i\in I$.
    \item If $i\preceq j$ and $j\preceq k$ then $i\preceq k$.
\end{enumerate}

It is not necessary that any two elements in $I$ can be compared. For example, if $X$ is any set and $I = \mathcal{P}(X)$ then the relation $A \preceq B$ if $A\subset B$ is a preorder in $I$ known as \textbf{direct inclusion}. Furthermore, given $A,B\in I$ it may happen that neither $A\subset B$ nor $B\subset A$. Another common preorder is given by $A\preceq B$ if $A\supset B$, known as \textbf{inverse inclusion}.

\begin{definition}
A preordered set $(I,\preceq)$ is a \textbf{directed set} if for any $i,j\in I$ there exists $k\in I$ such that $i\preceq k$ and $j\preceq k$.
\end{definition}

There are a couple of very familiar examples of directed sets.
\begin{enumerate}
    \item If $I = \mathcal{P}(X)$ then $I$ is a directed set with either direct or inverse inclusion.
    \item Both $\mathbb{N}$ and $\mathbb{R}$ are directed sets with their usual orders.
\end{enumerate}

\begin{definition}
Given a set $X$, a net is a function $s\colon I \to X$ such that $I$ is a directed set. The net is denoted as $(s_{i})_{i\in I}$ and each of its elements is denoted as $s_{i}$ as a parallel to sequences.
\end{definition}

Nets were designed to capture the properties of topological spaces in a similar way that sequences do in metric spaces. Most importantly, any topological space defines a criterion for convergence of nets.

\begin{definition}
Let $(X,\tau)$ be a topological space. We say that a net $(x_{i})_{i\in I}$ in $X$ converges to $x\in X$ if for every $U$ neighbourhood of $x$ there exists $i_{U}\in I$ such that $x_{i}\in U$ whenever $i \succeq i_{U}$.
\end{definition}

It is clear that every sequence is a net and that the convergence of sequences is a particular case of the convergence of nets. Since we will work with measures we will work in the extended real numbers, which we will denote as $[-\infty,\infty]$. Since this is a topological space, the previous definition is valid. The relevant result is the following.

\begin{proposition}
Every increasing net $(x_{i})_{i\in I}$ in $[-\infty,\infty]$ is convergent in the usual topology of $[-\infty,\infty]$.
\end{proposition}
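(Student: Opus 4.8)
The plan is to reduce the statement to the two lemmas already proved, by means of a case analysis that isolates the genuinely new feature of the present setting: the net may now take the values $+\infty$ or $-\infty$, which the real-valued lemmas do not address. Throughout I would set $s = \sup_{i\in I} x_i \in [-\infty,\infty]$ and argue that $x_i \to s$; monotonicity is the tool that confines the infinite values to a harmless position.

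First I would dispose of the degenerate case $s = -\infty$: here every $x_i = -\infty$, so the net is constant and converges to $-\infty$, since each basic neighbourhood $[-\infty, M)$ of $-\infty$ contains every $x_i$. Next, if the net attains the value $+\infty$, say $x_{i_0} = +\infty$, then by monotonicity $x_i = +\infty$ for all $i \succeq i_0$, and since the basic neighbourhoods of $+\infty$ are the intervals $(M,\infty]$, the net converges to $+\infty = s$. This is exactly the mechanism of the unbounded lemma, transported to the value $+\infty$.

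In the remaining case, where $s > -\infty$ and $x_i < +\infty$ for all $i$, I would pick $i_0$ with $x_{i_0} \in \mathbb{R}$ (possible because $s > -\infty$) and pass to the tail $J = \{i \in I : i \succeq i_0\}$. One checks that $J$ is again directed (directedness of $I$ together with transitivity keeps the dominating element inside $J$) and cofinal in $I$, and that on $J$ the net is genuinely real-valued, since $x_i \geq x_{i_0} > -\infty$ and $x_i < +\infty$. Being an increasing net of real numbers with $\sup_{i\in J} x_i = \sup_{i\in I} x_i = s$, it falls under Lemma \ref{LemaRedCreciente} when $s < \infty$ and under the unbounded lemma when $s = \infty$; in either case $(x_i)_{i\in J}$ converges to $s$.

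It then remains to transfer convergence on the cofinal tail $J$ to convergence of the full net on $I$. This is immediate from the definition of net convergence: given a neighbourhood $U$ of $s$, choose the index $i_U \in J$ supplied by the convergence on $J$; for any $i \in I$ with $i \succeq i_U$ we have $i \succeq i_0$, hence $i \in J$ and therefore $x_i \in U$. I expect the only delicate point to be precisely this bookkeeping with the infinite values, namely verifying that $J$ remains a directed set and that monotonicity really does force the net to be real-valued on $J$, rather than any computational difficulty; once that is settled, the two previous lemmas do all the remaining work.
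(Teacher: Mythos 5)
Your proof is correct and follows essentially the same route as the paper, which simply states the proposition as a consequence of ``combining the two previous results'' and gives no further proof. In fact your write-up supplies exactly the details the paper elides --- the nets in the two lemmas are real-valued, so the case analysis for nets attaining $\pm\infty$ and the restriction to the real-valued cofinal tail $J$ are genuinely needed --- making your argument a complete version of the paper's one-line justification.
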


Generalized Convergence is commonly employed in Topology and Functional Analysis. Further references for the topic are \cite{Megginson} and \cite{Dugundji}.

\subsection{Generalized Limits of Measures}

Generalized convergence is one of the most powerful tools in modern mathematics. This theory can be applied to the context of Measure Theory to obtain powerful results regarding the so-called inductive systems of measures. We recall the most important results.

\begin{theorem}
Let $(\Omega,\Sigma)$ be a measurable space and $(\mu_{i})_{i\in I}$ a net of measures such that for each $E\in\Sigma$ the net $(\mu_{i}(E))_{i\in I}$ is increasing. If $\nu\colon\Sigma \to [0,\infty]$ is the function given by
\begin{equation*}
    \nu(E) = \lim_{i} \mu_{i}(E)
\end{equation*}
then $\nu$ is a measure.
\end{theorem}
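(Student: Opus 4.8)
The plan is to verify the two axioms of a measure for $\nu$: that $\nu(\emptyset) = 0$ and that $\nu$ is countably additive on disjoint sequences. The first axiom is immediate: since each $\mu_{i}$ is a measure we have $\mu_{i}(\emptyset) = 0$ for all $i$, so the net $(\mu_{i}(\emptyset))_{i\in I}$ is constantly zero and its limit $\nu(\emptyset)$ is zero. The substance of the proof is countable additivity, so let $(E_{k})_{k\in\mathbb{N}}$ be a sequence of pairwise disjoint sets in $\Sigma$ with union $E$, and the goal is to show $\nu(E) = \sum_{k=1}^{\infty}\nu(E_{k})$.

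First I would handle finite additivity as a stepping stone. For each fixed $i$, the measure $\mu_{i}$ satisfies $\mu_{i}(A \sqcup B) = \mu_{i}(A) + \mu_{i}(B)$ for disjoint $A, B \in \Sigma$. Taking limits along the net on both sides and invoking the proposition on continuity of addition for nets (which requires the common directed index set $I$), the left side converges to $\nu(A\sqcup B)$ and the right to $\nu(A) + \nu(B)$, giving $\nu(A\sqcup B) = \nu(A) + \nu(B)$. By induction this yields finite additivity, and in particular monotonicity of $\nu$, since $\nu(A) = \nu(A\setminus B) + \nu(B) \geq \nu(B)$ whenever $B \subset A$. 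I would also note that each partial union satisfies $\nu\bigl(\bigcup_{k=1}^{N} E_{k}\bigr) = \sum_{k=1}^{N}\nu(E_{k}) \leq \nu(E)$ by monotonicity, so passing to the limit in $N$ gives the inequality $\sum_{k=1}^{\infty}\nu(E_{k}) \leq \nu(E)$.

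The reverse inequality is the main obstacle, and it is where the increasing hypothesis on the net does the essential work. The naive attempt would exchange the net-limit with the infinite sum, writing $\nu(E) = \lim_{i}\mu_{i}(E) = \lim_{i}\sum_{k}\mu_{i}(E_{k})$ and then trying to pull the limit inside the sum; but a net limit does not commute with an infinite series in general, so this must be justified. I would argue as follows: fix $N$ and use $\mu_{i}(E) \geq \mu_{i}\bigl(\bigcup_{k=1}^{N}E_{k}\bigr) = \sum_{k=1}^{N}\mu_{i}(E_{k})$ for every $i$; taking the net limit of both sides (a finite sum, so the addition proposition applies) gives $\nu(E) \geq \sum_{k=1}^{N}\nu(E_{k})$. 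Since this holds for every $N$, letting $N\to\infty$ yields $\nu(E) \geq \sum_{k=1}^{\infty}\nu(E_{k})$, which is precisely the reverse inequality.

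Combining the two inequalities gives $\nu(E) = \sum_{k=1}^{\infty}\nu(E_{k})$, completing the verification. The critical use of the hypotheses is twofold: the increasing assumption guarantees via the earlier proposition that each net $(\mu_{i}(E))_{i\in I}$ actually converges in $[0,\infty]$, so $\nu$ is well-defined as a function into $[0,\infty]$; and the directedness of the common index $I$ is exactly what licenses the additivity-of-limits proposition used at each step. I expect no further technical subtlety, since I have avoided interchanging a net limit with an infinite sum directly and instead reduced everything to limits of finite sums controlled by monotonicity.
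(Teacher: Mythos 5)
Your argument never establishes countable additivity, because the two inequalities you prove are in fact the same inequality. In your second paragraph you show $\sum_{k=1}^{N}\nu(E_{k}) = \nu\bigl(\bigcup_{k=1}^{N}E_{k}\bigr) \leq \nu(E)$ and let $N\to\infty$ to get $\sum_{k=1}^{\infty}\nu(E_{k}) \leq \nu(E)$. In your third paragraph, announced as ``the reverse inequality,'' you fix $N$, note $\mu_{i}(E) \geq \sum_{k=1}^{N}\mu_{i}(E_{k})$, take net limits, and let $N\to\infty$ to conclude $\nu(E) \geq \sum_{k=1}^{\infty}\nu(E_{k})$ --- which is literally the previous inequality with its sides swapped. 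The direction actually missing is $\nu(E) \leq \sum_{k=1}^{\infty}\nu(E_{k})$, and it cannot follow from finite additivity and monotonicity alone: take $\Omega=\mathbb{N}$, a nonprincipal ultrafilter $\mathcal{U}$ directed by reverse inclusion, and for each $A\in\mathcal{U}$ let $\mu_{A}$ be a Dirac mass at a point of $A$. This net of measures converges setwise to the $\{0,1\}$-valued function $\nu$ with $\nu(E)=1$ iff $E\in\mathcal{U}$, which is finitely additive, monotone, and superadditive (so it satisfies everything you proved), yet $\nu(\mathbb{N})=1$ while $\sum_{n}\nu(\{n\})=0$. So the subadditivity direction is exactly where the increasing hypothesis must enter, beyond merely guaranteeing that the limits exist.

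The repair is short once you use the fact (the paper's lemma on increasing nets) that an increasing net converges to its supremum, so that $\mu_{i}(E_{k}) \leq \nu(E_{k})$ for every $i$ and every $k$. Then for each fixed $i$, countable additivity of $\mu_{i}$ gives
\begin{equation*}
\mu_{i}(E) = \sum_{k=1}^{\infty}\mu_{i}(E_{k}) \leq \sum_{k=1}^{\infty}\nu(E_{k}),
\end{equation*}
and taking the supremum over $i$ yields $\nu(E) \leq \sum_{k=1}^{\infty}\nu(E_{k})$, closing the gap. The paper's own route is equivalent but packaged differently: it proves finite additivity as you do, then verifies continuity from below for increasing sequences $(A_{n})$ by exchanging the two suprema in $\sup_{n}\sup_{i}\mu_{i}(A_{n})$, and invokes the standard fact that a finitely additive set function that is continuous from below is a measure. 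In either formulation the essential step is an interchange of limits licensed by monotonicity (limits being suprema) --- precisely the step your write-up flagged as the main obstacle and then did not perform.
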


In the preceding situation we say that $(\mu_{i})_{i\in I}$ is an increasing net of measures. The function $\nu$ is known as the \textbf{generalized limit} of the net $(\mu_{i})_{i\in I}$ and is denoted by
\begin{equation*}
    \nu = \lim_{i}\mu_{i}.
\end{equation*}

\begin{theorem}
Let $(\Omega,\Sigma)$ be a measurable space, $(\mu_{i})_{i\in I}$ an increasing net of measures and $\nu = \lim_{i}\mu_{i}$. If $f\in M^{+}(\Omega,\Sigma)$ then
\begin{equation*}
    \int f\;d\nu = \lim_{i}\int f\;d\mu_{i}.
\end{equation*}
Furthermore, $f\in M(\Omega,\Sigma)$ is integrable if and only if $f^{+}$ are $f^{-}$ are integrable and the previous formula is valid in this case.
\end{theorem}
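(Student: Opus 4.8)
The plan is to follow the standard bootstrapping of measure theory, passing from indicators to simple functions to general non-negative functions, where the only genuinely new ingredient is an interchange of the net-limit over $i$ with the sequence-limit arising from the approximation of $f$ by simple functions. First I would establish the formula for indicator functions: if $f = \chi_{E}$ with $E\in\Sigma$, then $\int \chi_{E}\;d\nu = \nu(E) = \lim_{i}\mu_{i}(E) = \lim_{i}\int \chi_{E}\;d\mu_{i}$ directly from the definition of $\nu$. Next, for a non-negative simple function $s = \sum_{k=1}^{N}a_{k}\chi_{E_{k}}$, linearity of the integral together with the proposition on sums of convergent nets (extended to finite sums and to scalar multiples, and this is exactly where directedness of $I$ is essential) gives $\int s\;d\nu = \sum_{k}a_{k}\nu(E_{k}) = \sum_{k}a_{k}\lim_{i}\mu_{i}(E_{k}) = \lim_{i}\sum_{k}a_{k}\mu_{i}(E_{k}) = \lim_{i}\int s\;d\mu_{i}$.

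For general $f\in M^{+}(\Omega,\Sigma)$ I would choose an increasing sequence of non-negative simple functions $s_{m}\uparrow f$ and apply the Monotone Convergence Theorem to the measures $\nu$ and $\mu_{i}$ (each a measure, by the previous theorem), obtaining $\int f\;d\nu = \lim_{m}\int s_{m}\;d\nu$ and $\int f\;d\mu_{i} = \lim_{m}\int s_{m}\;d\mu_{i}$ for every $i$. Combining with the simple-function case yields $\int f\;d\nu = \lim_{m}\lim_{i}\int s_{m}\;d\mu_{i}$, while $\lim_{i}\int f\;d\mu_{i} = \lim_{i}\lim_{m}\int s_{m}\;d\mu_{i}$. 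The crux is therefore the interchange $\lim_{m}\lim_{i} = \lim_{i}\lim_{m}$ of these two iterated limits. I would handle it exactly as in the preceding theorem: the doubly-indexed array $a_{m,i} = \int s_{m}\;d\mu_{i}$ is increasing in $m$ (since $s_{m}\uparrow f$ and integration against a fixed measure is monotone) and increasing in $i$ (since $\int s_{m}\;d\mu_{i} = \sum_{k}a_{k}^{(m)}\mu_{i}(E_{k}^{(m)})$ is a finite non-negative combination of the increasing nets $\mu_{i}(E_{k}^{(m)})$), so both iterated limits equal the corresponding iterated suprema, and $\sup_{m}\sup_{i}a_{m,i} = \sup_{i}\sup_{m}a_{m,i}$ holds unconditionally for any family of extended reals. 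This gives the desired equality.

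The main obstacle is precisely this exchange of a sequential limit with a net limit: unlike for ordinary double sequences there is no general Moore--Osgood or dominated-interchange theorem at hand for nets, so I would rely on monotonicity to reduce everything to the unconditional interchange of suprema, which is where the argument of the previous theorem pays off a second time. Finally, for the signed case I would write $f = f^{+} - f^{-}$; by definition $f$ is $\nu$-integrable if and only if $\int f^{+}\;d\nu$ and $\int f^{-}\;d\nu$ are both finite, apply the $M^{+}$ result to $f^{+}$ and $f^{-}$ separately, and subtract using the additivity of limits of nets (now legitimate, since both limits are finite real numbers), obtaining $\int f\;d\nu = \lim_{i}\int f\;d\mu_{i}$.
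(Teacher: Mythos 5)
Your proposal is correct and follows essentially the same route as the paper: indicators, then simple functions by linearity of net limits, then general $f\in M^{+}(\Omega,\Sigma)$ via monotone approximation, with the limit interchange $\lim_{m}\lim_{i}=\lim_{i}\lim_{m}$ justified by rewriting both iterated limits as iterated suprema of the doubly monotone array $\int s_{m}\,d\mu_{i}$ and using the unconditional interchange of suprema, and finally the signed case by splitting $f=f^{+}-f^{-}$. Your write-up is in fact slightly more explicit than the paper's (which compresses the monotonicity in $i$ and the signed case into ``follows from linearity''), but the ideas are identical.
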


The previous results can be strengthened if $\mu_{i}$ restricts to $\mu_{j}$ whenever $j \preceq i$.

\begin{definition}
Let $(\Omega,\Sigma)$ be a measurable space and $(\Omega_{i})_{i\in I}$ an increasing net in $\Sigma$ such that $\Omega_{i}\nearrow\Omega$. An \textbf{inductive system of measures} is an increasing net of measures in $(\mu_{i})_{i\in I}$ in $\Sigma$ such that if $i \succeq j$ then for each $A\in\Sigma$ the equation
\begin{equation*}
    \mu_{i}(A\cap\Omega_{j}) = \mu_{j}(A\cap\Omega_{j})
\end{equation*}
is verified. The previous condition is known as the \textbf{compatibility condition}.
\end{definition}

Intuitively, an inductive system of measures is such that as $i$ increases the value of the measures is fixed in $\Omega_{i}$ as $\mu_{i}$. Thus, the inductive system extends the previous measures as the index increases.

Define $\Sigma_{i} = \Sigma\restriction_{\Omega_{i}} = \{A\cap\Omega_{i}\;|\;A\in\Sigma\}$. Note that it suffices that each $\mu_{i}$ is defined in $\Sigma_{i}$, since we can replace it by
\begin{equation*}
    \tilde{\mu}_{i}(A) = \mu_{i}(A\cap\Omega_{i}).
\end{equation*}
In this case the measures $\tilde{\mu_{i}}$ are defined in $\Sigma$ and as such can replace the original net.

\begin{definition}
If $(\mu_{i})_{i\in I}$ is an inductive system of measures we define its \textbf{inductive limit} $\mu$ as the measure
\begin{equation*}
    \mu = \lim_{i}\mu_{i}.
\end{equation*}
\end{definition}

It follows from the previous theorem that if $f\in M^{+}(\Omega,\Sigma)$ then
\begin{equation*}
    \int f\;d\mu = \lim_{i}\int f\;d\mu_{i}.
\end{equation*}
The additional property obtained from the compatibility condition is the following.

\begin{theorem}\label{TeoSistemaInductivoMedidas}
Let $(\mu_{i})_{i\in I}$ be an inductive system of measures and $\mu$ its inductive limit. If $f\in M^{+}(\Omega,\Sigma)$ vanishes outside of $\Omega_{i_{0}}$ then
\begin{equation*}
    \int f\;d\mu = \int f\;d\mu_{i_{0}}.
\end{equation*}
\end{theorem}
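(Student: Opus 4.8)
The plan is to exploit the compatibility condition to show that the net $\left(\int f\,d\mu_{i}\right)_{i\in I}$ is eventually constant, with eventual value $\int f\,d\mu_{i_{0}}$, and then to invoke the identity $\int f\,d\mu = \lim_{i}\int f\,d\mu_{i}$ established in the previous section together with the fact that a net which is eventually constant converges to its eventual value. The real content is therefore a single observation: for indices beyond $i_{0}$, integration against $\mu_{i}$ can no longer see any difference from integration against $\mu_{i_{0}}$, because $f$ is concentrated on $\Omega_{i_{0}}$, where the two measures have been forced to agree.

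First I would fix $i \succeq i_{0}$ and prove that $\int f\,d\mu_{i} = \int f\,d\mu_{i_{0}}$ for every $f \in M^{+}(\Omega,\Sigma)$ vanishing outside $\Omega_{i_{0}}$, building up from characteristic functions. If $A \in \Sigma$ satisfies $A \subseteq \Omega_{i_{0}}$, then $A = A\cap\Omega_{i_{0}}$, so the compatibility condition applied with $j = i_{0}$ gives
\begin{equation*}
    \int \chi_{A}\,d\mu_{i} = \mu_{i}(A\cap\Omega_{i_{0}}) = \mu_{i_{0}}(A\cap\Omega_{i_{0}}) = \int \chi_{A}\,d\mu_{i_{0}}.
\end{equation*}
By linearity this extends to every simple function in $S^{+}(\Omega,\Sigma)$ supported in $\Omega_{i_{0}}$, since the canonical representation of such a function uses only sets on which it is positive, all of which lie in $\Omega_{i_{0}}$. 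For a general $f \in M^{+}(\Omega,\Sigma)$ vanishing outside $\Omega_{i_{0}}$, I would pick $(s_{n})_{n\in\mathbb{N}}$ in $S^{+}(\Omega,\Sigma)$ with $s_{n}\nearrow f$; because $0 \le s_{n}\le f$, each $s_{n}$ also vanishes outside $\Omega_{i_{0}}$, and the Monotone Convergence Theorem applied to $\mu_{i}$ and to $\mu_{i_{0}}$ yields $\int f\,d\mu_{i} = \lim_{n}\int s_{n}\,d\mu_{i} = \lim_{n}\int s_{n}\,d\mu_{i_{0}} = \int f\,d\mu_{i_{0}}$.

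With this identity in hand, the net $\left(\int f\,d\mu_{i}\right)_{i\in I}$ takes the single value $\int f\,d\mu_{i_{0}}$ for all $i \succeq i_{0}$, so for any neighbourhood $U$ of $\int f\,d\mu_{i_{0}}$ in $[-\infty,\infty]$ the index $i_{0}$ itself witnesses the convergence criterion; hence $\lim_{i}\int f\,d\mu_{i} = \int f\,d\mu_{i_{0}}$, and combining with $\int f\,d\mu = \lim_{i}\int f\,d\mu_{i}$ finishes the proof. I expect the only delicate point to be the passage from the set-level compatibility condition to its functional counterpart, namely verifying that the approximating simple functions inherit the support restriction so that the identity survives the monotone limit; the concluding net-convergence step is immediate from the definition of convergence, as the tail $\{i : i \succeq i_{0}\}$ already realizes the convergence criterion.
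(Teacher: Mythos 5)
Your proposal is correct and follows essentially the same route as the paper: reduce to showing $\int f\,d\mu_{i} = \int f\,d\mu_{i_{0}}$ for $i \succeq i_{0}$, establish this for characteristic functions via the compatibility condition, extend by linearity to simple functions, and pass to general $f$ by monotone convergence before invoking eventual constancy of the net of integrals. The only (harmless) difference is technical: you obtain approximating simple functions supported in $\Omega_{i_{0}}$ directly from the bound $0 \le s_{n} \le f$, whereas the paper restricts $f$ to $\Omega_{i_{0}}$, approximates in the restricted space $(\Omega_{i_{0}},\Sigma_{i_{0}})$, and extends trivially --- your variant is, if anything, slightly cleaner.
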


More on inductive systems of measures can be found in \cite{Macheras}.

\subsection{Geometry of Volumes}

Before applying the theory of inductive systems of measures we need to recall the specifics of volume measurement developed in Differential Geometry. Let's begin with the integration of differential forms. Given a continuous $n$-form on $\mathbb{R}^{n}$ it must be of the form
\begin{equation*}
    \omega = f \;dx^{1}\wedge\cdots\wedge dx^{n},
\end{equation*}
hence we define
\begin{equation*}
    \int f \;dx^{1}\wedge\cdots\wedge dx^{n} = \int f \;dx^{1}\cdots dx^{n}.
\end{equation*}
If $M$ is an $n$-dimensional manifold and $\omega$ is a continuous $n$-form with support covered by a coordinate patch $\varphi\colon U \subset \mathbb{R}^{n} \to M$ that preserves orientation then $\varphi^{\ast}\omega$ is a continuous $n$-form in $\mathbb{R}^{n}$, hence we define
\begin{equation*}
    \int_{M} \omega = \int_{U} \varphi^{\ast}\omega.
\end{equation*}
Finally, if $\omega$ is a continuous $n$-form and $(U_{i})_{i\in I}$ is a cover of its support by coordinate patches then consider a partition of unity $(\varphi_{i})_{i\in I}$ subordinated to the cover, then $\varphi_{i}\omega$ is continuous $n$-form with support in a coordinate patch, hence can be integrated as we previously discussed. We define
\begin{equation*}
    \int_{M}\omega = \sum_{i}\int_{M}\varphi_{i}\omega.
\end{equation*}
This does not depend on the cover or the partition of unity used. The relation of this integral with Measure Theory is given by the following result.

\begin{theorem}
Let $\omega$ be an $n$-form in an orientable manifold $M$ of dimension $n$. There is a unique measure $\mu_{\omega}$ in $Bo(\tau_{M})$ such that
\begin{equation*}
    \int f\;d\mu_{\omega} = \int_{M} f \omega
\end{equation*}
 for any $f\in C_{c}(M)$.
\end{theorem}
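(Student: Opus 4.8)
The plan is to apply the Riesz–Markov–Kakutani representation theorem to a suitable positive linear functional on $C_c(M)$. The orientation of $M$ together with the $n$-form $\omega$ determines a way to integrate compactly supported continuous functions, namely $f \longmapsto \int_M f\omega$, and the goal is to recognize this assignment as a functional satisfying the hypotheses of the representation theorem, thereby producing the measure $\mu_\omega$.

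\textbf{First I would verify that the map is a well-defined linear functional.} Define $\Lambda\colon C_c(M) \to \mathbb{R}$ by $\Lambda(f) = \int_M f\omega$. Linearity is immediate from the linearity of the integral of differential forms (the partition-of-unity definition recalled above is additive in the integrand), and the fact that $\Lambda(f)$ is finite follows because $f$ has compact support, so $f\omega$ is a compactly supported continuous $n$-form, which can be integrated by subordinating a \emph{finite} partition of unity to a cover of the support by coordinate patches. I would note that the independence of the value $\int_M f\omega$ from the chosen cover and partition of unity has already been asserted, so $\Lambda$ is genuinely well-defined.

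\textbf{The step I expect to be the main obstacle is establishing positivity}, since the Riesz–Markov–Kakutani theorem requires $\Lambda(f) \geq 0$ whenever $f \geq 0$, and this is exactly the point where the orientation of $M$ must interact correctly with the sign conventions built into $\int_M \omega$. In a single orientation-preserving coordinate patch $\varphi\colon U \subset \mathbb{R}^n \to M$ one writes $\varphi^*(f\omega) = (f\circ\varphi)\, g \; dx^1 \wedge \cdots \wedge dx^n$ for some continuous $g$, and positivity of $\Lambda$ hinges on the claim that $g \geq 0$; this is the assertion that $\omega$ is a \emph{positive} (volume) form compatible with the orientation, equivalently that $\omega$ evaluated on an oriented frame is positive. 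I would make this hypothesis explicit—interpreting $\omega$ as an orientation form, or at least noting that the measure it induces is a signed measure in general and positive precisely when $\omega$ orients $M$—and then reduce the positivity of the global integral to the patchwise statement using a nonnegative partition of unity, so that $\Lambda(f) = \sum_i \int_M \varphi_i f \omega$ is a sum of nonnegative terms.

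\textbf{Finally I would invoke the representation theorem and address uniqueness.} Once $\Lambda$ is shown to be a positive linear functional on $C_c(M)$ for the locally compact Hausdorff space $M$, the Riesz–Markov–Kakutani theorem yields a Borel measure $\mu_\omega$ on $Bo(\tau_M)$ with $\int f \, d\mu_\omega = \Lambda(f) = \int_M f\omega$ for all $f \in C_c(M)$, which is precisely the desired identity. For uniqueness I would rely on the regularity clause of the representation theorem: two Radon measures agreeing on $C_c(M)$ must coincide, so $\mu_\omega$ is the unique such measure (up to the standard regularity normalization). I expect no further difficulty here beyond citing the theorem in the form appropriate to a $\sigma$-compact manifold, where regularity is automatic.
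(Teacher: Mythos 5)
Your proposal diverges from the statement at exactly the point you yourself flag as the main obstacle: positivity. The theorem is stated for an \emph{arbitrary} $n$-form $\omega$ on an orientable $n$-manifold, and for such a form positivity of $\Lambda(f)=\int_{M}f\omega$ is simply false in general (any $\omega$ that changes sign gives $\Lambda(f)<0$ for some $f\geq 0$); no argument can establish it. Your remedy --- adding the hypothesis that $\omega$ is a positive form compatible with the orientation --- changes the statement: what you then prove is, in effect, the paper's subsequent Corollary (the volume measure of an orientable Riemannian manifold), not this theorem. So, as a proof of the theorem as stated, there is a genuine gap: the sign-changing case is not covered by the positive-functional version of Riesz--Markov--Kakutani that you invoke.

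The paper's own proof avoids positivity entirely by using a different form of the representation theorem: the one representing \emph{continuous} linear functionals on $C_{c}(M)$, i.e.\ elements of $C_{c}(M)^{\ast}$, where continuity means $\int_{M}f_{m}\omega \to \int_{M}f\omega$ whenever $f_{m}\to f$ uniformly with common compact support. The required input is then not positivity but a local bound $\left|\int_{M}f\omega\right| \leq C_{K}\,\sup|f|$ for $f$ supported in a compact set $K$, which holds because the coefficients of $\omega$ are bounded on compact sets; the resulting $\mu_{\omega}$ is in general a \emph{signed} measure. Indeed, immediately after this theorem the paper remarks that $\mu_{\omega}$ need not be positive and that positivity only enters later, for the Riemannian volume form --- which is precisely where your argument belongs. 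If you wanted to salvage your route for the general statement, you would need to decompose $\Lambda$ into a difference of positive functionals (a Jordan-type decomposition) and apply the positive theorem to each piece, which is essentially how the signed version of Riesz--Markov--Kakutani is proved.
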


This follows from the Riesz-Markov-Kakutani Theorem applied to compact subsets $K$ of $M$, on which the functional $f\longmapsto \int_{K} f\omega$ is continuous and hence generated by a measure.

The integral of an arbitrary differential form is not necessarily related to volumes, mainly because the integral may not be a positive functional, that is, it may not happen that
\begin{equation*}
    \int_{M} f\omega \geq 0
\end{equation*}
whenever $f \geq 0$. Hence, the measure $\mu_{\omega}$ is in general a signed measure. When the manofld is riemannian, however, there exists a distinguished $n$-form such that integration is a positive functional.

\begin{definition}
Let $(M,g)$ be an orientable riemmanian manifold. The unique $n$-form $V_{g}$ such that for any orthonormal frame $\{E_{i}\}_{i=1}^{n}$ (orthonormal with respect to $g_{p}$)
\begin{equation*}
    V_{g}(E_{1},\ldots,E_{n}) = 1
\end{equation*}
is known as the \textbf{volume form} of $(M,g)$.
\end{definition}

The volume form exists and is unique whenever $(M,g)$ is orientable. If coordinates are given and the metric tensor has coordinate components $g_{ij}$ then
\begin{equation*}
    V_{g} = \sqrt{det\;g_{ij}}\;dx^{1}\wedge \cdots \wedge dx^{n}.
\end{equation*}

Since the volume form in a riemannian manifold is a distinguished form we can define the integral of continuous functions using this form. Precisely, if $f\in C_{c}(M)$ we define
\begin{equation*}
    \int_{M} f = \int fV_{g}.
\end{equation*}
This functional turns out to be positive in the sense that if $f\geq 0$ then
\begin{equation*}
    \int_{M} f \geq 0.
\end{equation*}
Thus, the corresponding measure is a positive measure by the positive version of the Riesz-Markov-Kakutani Theorem.

\begin{corollary}
Let $M$ be an orientable riemannian manifold of dimension $n$. There is a unique positive measure $\mu_{M}$ in $Bo(\tau_{M})$ such that 
\begin{equation*}
    \int f\;d\mu_{M} = \int_{M} f
\end{equation*}
for any $f\in C(M)$.
\end{corollary}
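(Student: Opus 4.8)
The plan is to read this corollary as the special case $\omega = V_{g}$ of the preceding theorem, so that the only genuinely new content is the \emph{positivity} of the representing measure. First I would set $\omega = V_{g}$ in the preceding theorem, which immediately produces a unique Borel measure $\mu_{M} := \mu_{V_{g}}$ on $Bo(\tau_{M})$ satisfying
\[
\int f\,d\mu_{M} = \int_{M} f V_{g} = \int_{M} f
\]
for every $f \in C_{c}(M)$, the last equality being just the definition of $\int_{M} f$. This already furnishes existence and uniqueness of a Borel measure with the desired integration property; what remains is to check that $\mu_{M}$ is a positive measure rather than merely a signed one.

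To upgrade to a positive measure I would verify that the functional $f \longmapsto \int_{M} f V_{g}$ is positive. Working locally, I would fix an oriented atlas with charts $\varphi_{i}\colon U_{i}\subset\mathbb{R}^{n}\to M$ and a subordinate partition of unity $(\psi_{i})$, so that by definition $\int_{M} f V_{g} = \sum_{i}\int_{U_{i}}\varphi_{i}^{\ast}(\psi_{i} f V_{g})$. In each oriented chart the coordinate expression $V_{g} = \sqrt{det\;g_{ij}}\;dx^{1}\wedge\cdots\wedge dx^{n}$ has strictly positive density $\sqrt{det\;g_{ij}} > 0$, so each summand is an ordinary Lebesgue integral of the non-negative integrand $(\psi_{i}f)\circ\varphi_{i}\cdot\sqrt{det\;g_{ij}}$ whenever $f \geq 0$. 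Hence $\int_{M} f V_{g} \geq 0$ for $f \geq 0$, which is precisely the positivity asserted in the text preceding the statement.

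With positivity in hand I would invoke the positive version of the Riesz-Markov-Kakutani theorem, which represents a positive linear functional on $C_{c}(M)$ by a genuine non-negative regular Borel measure; by the uniqueness clause this measure coincides with the $\mu_{M}$ already obtained, so $\mu_{M}$ is positive and uniqueness is inherited. The passage from $C_{c}(M)$ to all of $C(M)$ in the statement is immediate when $M$ is compact, where $C(M)=C_{c}(M)$, and otherwise should be understood as holding for every $f$ for which either side is defined, obtained by monotone approximation applied to $f^{+}$ and $f^{-}$.

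The main obstacle is the positivity verification, and it rests entirely on orientation consistency: the Jacobian factor $\sqrt{det\;g_{ij}}$ is strictly positive in an \emph{oriented} chart, which is exactly what makes the partition-of-unity expression a sum of non-negative Lebesgue integrals. Once this is secured, everything else is a direct citation of the previous theorem together with the positive Riesz-Markov-Kakutani theorem.
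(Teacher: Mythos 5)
Your proposal is correct and follows essentially the same route as the paper: the paper also obtains the corollary by observing that the functional $f \longmapsto \int_{M} f V_{g}$ is positive and then invoking the positive version of the Riesz--Markov--Kakutani Theorem, exactly as you do. Your chart-by-chart verification that $\sqrt{det\;g_{ij}} > 0$ forces positivity is a detail the paper merely asserts (``this functional turns out to be positive''), so you have filled in, rather than diverged from, the intended argument.
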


This measure is precisely what we will need for our purposes.

\begin{definition}
Let $M$ be an orientable riemannian manifold of dimension $n$. The measure $\mu_{M}$ is known as the \textbf{volume measure} of $M$.
\end{definition}

Since we will work with submanifolds of $\mathbb{R}^{n}$ we can always assume that the volume form exists and is the pullback of the volume form of $\mathbb{R}^{n}$ under inclusion.

The previous results can be found in standard Differential Topology and Geoemetry textbooks, such as \cite{Lee} and \cite{Tu}.

\section{Inductive Volume Measure}

We can now construct the inductive volume measure. The basic idea is to use the measures $\mu_{M}$ where $M$ is an $m$-dimensional submanifold of $\mathbb{R}^{n}$ to construct an inductive system of measures.  Theorem \ref{TeoSistemaInductivoMedidas} will then yield the Area Formula for the inductive limit. The Coarea Formula will require some more work, but will follow easily from the smooth case. The results of this section remain valid if $\mathbb{R}^{n}$ is replaced by a riemannian manifold and the proofs are the same. The only noteworthy exception is lemma \ref{LemaIntersección} (see subsection \ref{SecRiemannian}).

\subsection{Construction and Area Formula}

Fix $n,m\in\mathbb{N}$ such that $m\leq n$. To each $m$-dimensional submanifold $M$ of $\mathbb{R}^{n}$ we can assign its volume measure $\mu_{M}$, which suggests how to define a net of measures. However, the collection of $m$-dimensional submanifolds does not form a directed set under inclusion, because the union of manifolds is not in general a manifold. To solve this problem we first establish two fundamental facts.

First, recall that if $\mathcal{C}$ is a class of subsets of $\mathbb{R}^{n}$ and $A$ is another subset then we can define
\begin{equation*}
    \mathcal{C}\restriction_{A} = \{C\cap A\;|\;C\in\mathcal{C}\}.
\end{equation*}
It is easily verified that
\begin{equation*}
    \sigma(\mathcal{C}\restriction_{A}) = \sigma(\mathcal{C})\restriction_{A}.
\end{equation*}
If $S\subset\mathbb{R}^{n}$ then
\begin{equation*}
    \sigma(\tau_{\mathbb{R}^{n}}\restriction_{S}) = \sigma(\tau_{\mathbb{R}^{n}})\restriction_{S}.
\end{equation*}
If $S$ is also a submanifold then $\tau_{S} = \tau_{\mathbb{R}^{n}}\restriction_{S}$, hence
\begin{equation*}
    \sigma(\tau_{S}) = \sigma(\tau_{\mathbb{R}^{n}})\restriction_{S}.
\end{equation*}
Thus if $A$ is Borel measurable subset of $S$ then $A$ is a Borel subset of $\mathbb{R}^{n}$ and if $B$ is a Borel subset of $\mathbb{R}^{n}$ then $B\cap S$ is a Borel subset of $S$. This implies that there is no need to distinguish between both classes of Borel subsets.

The second fundamental fact we need is a little more involved and is best stated and proved as a lemma.

\begin{lemma}\label{LemaIntersección}
For any pair $M$ and $N$ of $m$-dimensional submanifolds their volume measures $\mu_{M}$ and $\mu_{N}$ satisfy
\begin{equation*}
    \mu_{M}(M\cap N) = \mu_{N}(M\cap N).
\end{equation*}
The same relation is valid for any Borel set contained in $M\cap N$.
\end{lemma}
Note that the proof of the lemma is trivially true if we assume the Area Formula for Hausdorff measures. However, for the sake of avoiding this result, we will give an entirely differential-geometric proof.
\begin{proof}
To simplify notation, whenever we write a chart as $\varphi\colon \mathbb{R}^{m} \to \mathbb{R}^{n}$, we will mean that $\varphi$ is defined between suitable neighbourhoods of the domain and codomain.

Due to $\sigma$-additivity it is enough to establish the result in local charts. Given a point in $M\cap N$ select a slice chart $\varphi\colon \mathbb{R}^{n} \to \mathbb{R}^{n}$ for $M$, that is,
the function $\varphi_{M}\colon \mathbb{R}^{m} \to \mathbb{R}^{n}$ given by
\begin{equation*}
    \varphi_{M}(x) = \varphi(x,0)
\end{equation*}
is a chart for $M$, in adequate neighbourhoods. Since $N$ is a submanifold, we can employ the Implicit Function Theorem to obtain a function $F\colon \mathbb{R}^{m} \to \mathbb{R}^{n-m}$ such that the function $\varphi_{N}\colon \mathbb{R}^{m} \to \mathbb{R}^{n}$ given by
\begin{equation*}
    \varphi_{N}(x) = \varphi(x,F(x))
\end{equation*}
is a chart for $N$. (If the last $n-m$ components can't be solved for $F$ we consider $\phi(x,y) = \varphi(x+y,y)$, which is still a slice chart in which the function $F$ can be obtained). Hence the set $M\cap N$ is defined in coordinates as the set where $F(x) = 0$. The differentials of the charts $\varphi_{M}$ and $\varphi_{N}$ are
\begin{equation*}
    D\varphi_{M} = D\varphi
    \left(\begin{array}{c}
         Id \\
          0
    \end{array}\right)
    \quad\text{and}\quad D\varphi_{N} = D\varphi
    \left(\begin{array}{c}
         Id \\
          DF
    \end{array}\right).
\end{equation*}
The metrics given by these charts are
\begin{equation*}
    g^{M} = D\varphi_{M}^{t} D\varphi_{M} \quad\text{and}\quad g^{N} = D\varphi_{N}^{t} D\varphi_{N},
\end{equation*}
hence the densities $\sqrt{det\;g^{M}}$ and $\sqrt{det\;g^{N}}$ are equal in $M\cap N$ precisely at points $x$ where $DF_{x} = 0$. The set $M\cap N$ hence can be divided in the measurable sets where $DF_{x} \neq 0$ and $DF_{x} = 0$. In the first case the submanifolds are tranverse and their intersection is a submanifold of codimension $2(n - m)$ and hence null with respect to both $\mu_{M}$ and $\mu_{N}$. In the second case, both densities coincide and hence the measures $\mu_{M}$ and $\mu_{N}$ have the same value on this set.
\end{proof}

We can now construct the net of measures we will use. Let's denote by $\mathcal{C}_{m}$ the class of Borel subsets that are contained in an $m$-dimensional submanifold of $\mathbb{R}^{n}$. If $C\in\mathcal{C}_{m}$ then there exists an $m$-dimensional submanifold $S$ such that $C\subset S$. We consider the volume measure $\mu_{S}$ defined in the previous section. Since $C\subset S$ is a Borel set we can define
\begin{equation*}
    \mu_{C}(A) = \mu_{S}(A\cap C),
\end{equation*}
which is a Borel measure in $\mathbb{R}^{n}$. Thus, to every set in $\mathcal{C}_{m}$ we can associate a measure. Furthermore, lemma \ref{LemaIntersección} shows that the measure does not depend on the submanifold $S$.

Define $I_{m}$ to be the class of finite disjoint unions of elements in $\mathcal{C}_{m}$. It is clear that for any such collection $\tilde{M} = \bigcup_{i=1}^{k}C_{i}$ we can define
\begin{equation*}
    \mu_{\tilde{M}} = \sum_{i=1}^{k}\mu_{C_{i}}.
\end{equation*}
Note that integration with respect to these measures is still given by the measure generated by volume forms. Once again, lemma \ref{LemaIntersección} shows that the measure does not depend on the collection $\{C_{i}\}_{i=1}^{k}$.

We now show that $I_{m}$ is a directed set. To see this, consider $A,B\in\mathcal{C}_{m}$. By definition, there exists submanifolds $M$ and $N$ such that $A\subset M$ and $B\subset N$. Then
\begin{align*}
    A\cup B = A \uplus (B\setminus (A\cap B)).
\end{align*}
Since
\begin{align*}
    B\setminus (A\cap B) \subset B \subset N
\end{align*}
we have that $B\setminus (A\cap B)\in \mathcal{C}$. Thus, $A\cup B$ can be written as a disjoint unions of elements in $\mathcal{C}_{m}$. Repeating this procedure a finite number of times, we obtain that $I_{m}$ is indeed a directed set.

It follows that
\begin{equation*}
    (\mu_{\tilde{M}})_{\tilde{M}\in I_{m}}
\end{equation*}
is a net of measures in $Bo(\Omega)$.

To establish that the net is actually an inductive system we first make the following observations. If $M$ and $S$ are $m$-dimensional submanifolds such that $M\subset S$ and $\iota_{M\subset S}$ is the inclusion of $M$ in $S$ and $\omega_{M}$ and $\omega_{S}$ are their volume forms then
\begin{equation*}
    \omega_{M} = \iota_{M\subset S}^{\ast}\omega_{S},
\end{equation*}
which implies that
\begin{equation*}
    dV_{S}\restriction_{M} = dV_{M}.
\end{equation*}
Thus, both forms generate the same measures in $M$, that is
\begin{equation*}
    \mu_{M}(A\cap M) = \mu_{S}(A\cap M)
\end{equation*}
for any Borel set $A$. Furthermore, by definition of $\mu_{C}$, the previous equation can be rewritten as
\begin{equation}\label{EqCompatibilidadBorel}
    \mu_{C}(A\cap C) = \mu_{D}(A\cap C)
\end{equation}
whenever $C,D\in\mathcal{C}_{m}$ and $C\subset D$.

We now show that this is an inductive system. Consider $\tilde{M} = \biguplus_{i=1}^{k}C_{i}$ and $\tilde{N}  = \biguplus_{j=1}^{l}D_{j}$ in $I_{m}$ such that $\tilde{M} \subset \tilde{N}$. Manipulating the collections we may assume that $k = l$ and $C_{i} \subset D_{i}$, hence the definitions and equation (\ref{EqCompatibilidadBorel}) imply that
\begin{align*}
    \mu_{\tilde{M}}(A\cap \tilde{M}) &= \sum_{j=1}^{k} \mu_{C_{j}}(A\cap \tilde{M})\\
    &= \sum_{j=1}^{k} \mu_{C_{j}}(A \cap \tilde{M}\cap C_{j})\\
    &= \sum_{j=1}^{k} \mu_{D_{j}}(A \cap \tilde{M} \cap C_{j})\\
    &= \sum_{j=1}^{k} \mu_{D_{j}}(A \cap \tilde{M})\\
    &= \mu_{\tilde{N}}(A\cap \tilde{M}).
\end{align*}
This is the compatibility condition. The fact that $(\mu_{\tilde{M}})_{\tilde{M}\in I_{m}}$ is increasing follows in a similar fashion since the sets are disjoint and the summands are non-negative. This shows that $(\mu_{\tilde{M}})_{\tilde{M}\in I_{m}}$ is actually an inductive system.

\begin{definition}
We define the \textbf{inductive volume measure} of \textbf{local dimension} $m$ and \textbf{ambient dimension} $n$ as the measure $\mathcal{H}_{m\leq n}\colon Bo(\Omega) \to [0,\infty]$ given by
\begin{equation*}
    \mathcal{H}_{m\leq n} = \lim_{\tilde{M}}\mu_{\tilde{M}}.
\end{equation*}
\end{definition}

The notation $\mathcal{H}_{m\leq n}$ is chosen simply to parallel Hausdorff measures. Note that the inductive volume measure requires the local dimension to be an integer, as the tools of Differential Geometry wouldn't be available otherwise, which is a disadvantage with respect to Hausdorff measure. Applying the results of the inductive limit of measures we obtain the following.

\begin{theorem}[Area Formula. Particular Case.]
If $A\in Bo(\Omega)$ is covered by a sequence $(M_{k})_{k\in\mathbb{N}}$ of $m$-dimensional submanifolds then
\begin{equation*}
    \mathcal{H}_{m\leq n}(A) = \sum_{k=1}^{\infty}\mu_{M_{k}}(A\cap M_{k}).
\end{equation*}
If $f$ is an integrable function with respect to $\mathcal{H}_{m\leq n}$ and $\{M_{k}\}_{k\in\mathbb{N}}$ is a disjoint sequence of submanifolds then
\begin{equation*}
    \int_{\bigcup\limits_{k\in\mathbb{N}}M_{k}} f\;d\mathcal{H}_{m\leq n} = \sum_{k=1}^{\infty}\int f\;d\mu_{M_{k}}.
\end{equation*}
Likewise, if $f$ is integrable with respect to $\mu_{M_{k}}$ for each $k\in\mathbb{N}$ and
\begin{equation*}
    \sum_{k=1}^{\infty}\int f^{+}\;d\mu_{M_{k}} < \infty \quad\text{and}\quad \sum_{k=1}^{\infty}\int f^{-}\;d\mu_{M_{k}} < \infty
\end{equation*}
then $f$ is integrable with respect to $\mathcal{H}_{m \leq n}$ and the previous formula is valid.
\end{theorem}

Another disadvantage of the inductive volume measure with respect to Hausdorff measure is that $\mathcal{H}_{m\leq n}$ is only defined on \textit{Borel subsets}, while Hausdorff measure is defined on a much larger $\sigma$-algebra. This is the reason for our construction to be insufficient to study irregular sets. Another disadvantage is that we lack an explicit formula for the Hasudorff measure as we constructed it, since we gave priority to knowing how to compute its integrals. However, by selecting $f$ as a charateristic function in the previous result we obtain the following.

\begin{corollary}
If a Borel measurable set $A$ is contained in the union of a disjoint sequence of $m$-dimensional manifolds $(M_{k})_{n\in\mathbb{N}}$ then
\begin{equation*}
    \mathcal{H}_{m\leq n}(A) = \sum_{k=1}^{\infty}\mu_{M_{k}}(A\cap M_{k}).
\end{equation*}
\end{corollary}

We will develop some more formulas for $\mathcal{H}_{m\leq n}$ in subsection \ref{SubsecComparación}. We insist that the advantage of our method is that it is much simpler than the usual one and immediately yields the Area Formula.

\subsection{Coarea Formula}

The second fundamental result about the inductive volume measure that we will prove is the Coarea Formula. We will actually give a new proof that any Borel measure that satisfies the Area Formula also satisfies the Coarea Formula. Since the inductive volume measure is defined through Differential Geometry, we first establish the smooth version of this result. We follow the proof of \cite{Chavel}.

\begin{theorem}[Smooth Coarea Formula]
Let $H\colon \mathbb{R}^{n} \to \mathbb{R}$ be a smooth function and for every regular value $t$ let $dV_{t}$ be the volume form of $H^{-1}(\{t\})$. If $dV$ is the volume form of $\mathbb{R}^{n}$ then
\begin{equation*}
    dV = \frac{1}{|\nabla H|} dt\wedge dV_{t}
\end{equation*}
for every regular value $t$. In consequence, for any continuous function $f$ we have that
\begin{equation}\label{EqCoarea}
    \int_{H^{-1}([a,b])} f\;dV = \int_{a}^{b}\int_{H^{-1}(\{t\})} \frac{f}{|\nabla H|}\;dV_{t}\;dt.
\end{equation}
\end{theorem}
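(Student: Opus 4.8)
The plan is to split the statement into its two halves: the pointwise identity of $n$-forms along a regular level set, which is pure linear algebra, and the integral formula, which follows by wedging with $dt$ and invoking Fubini. Throughout write $\Sigma_{t} = H^{-1}(\{t\})$.

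First I would prove the form identity at a fixed regular value $t$ and a point $p\in\Sigma_{t}$. Since $t$ is regular, $\nabla H(p)\neq 0$, so $N = \nabla H(p)/|\nabla H(p)|$ is a well-defined unit normal to $T_{p}\Sigma_{t}$. Choosing an orthonormal basis $\{E_{1},\dots,E_{n-1}\}$ of $T_{p}\Sigma_{t}$ makes $\{E_{1},\dots,E_{n-1},N\}$ a positively oriented orthonormal basis of $\mathbb{R}^{n}$; let $\{\omega^{1},\dots,\omega^{n}\}$ be its dual coframe. By the defining property of the volume forms, $dV = \omega^{1}\wedge\cdots\wedge\omega^{n}$ and $dV_{t} = \omega^{1}\wedge\cdots\wedge\omega^{n-1}$ on $\Sigma_{t}$. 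Because $dH(E_{j}) = \langle \nabla H, E_{j}\rangle = 0$ and $dH(N) = \langle \nabla H, N\rangle = |\nabla H|$, we get $dH = |\nabla H|\,\omega^{n}$, hence
\begin{equation*}
    dV = dV_{t}\wedge\omega^{n} = \frac{1}{|\nabla H|}\,dV_{t}\wedge dH,
\end{equation*}
which, identifying the pullback $H^{\ast}dt$ with $dH$ and absorbing the reordering sign into the orientation convention, is the claimed identity $dV = \frac{1}{|\nabla H|}\,dt\wedge dV_{t}$.

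Next, to derive the integral formula I would manufacture a product structure from the gradient flow. On the open set of regular points define $X = \nabla H/|\nabla H|^{2}$, so that $dH(X)\equiv 1$ and therefore $H(\phi_{s}(p)) = H(p)+s$ along the flow $\phi_{s}$ of $X$; thus $\phi_{s}$ carries $\Sigma_{t}$ diffeomorphically onto $\Sigma_{t+s}$ as long as no critical point is crossed. Over a range of regular values the map $\Psi(t,p) = \phi_{t-a}(p)$ for $p\in\Sigma_{a}$ is then a diffeomorphism $[a,b]\times\Sigma_{a}\to H^{-1}([a,b])$ with $H\circ\Psi = t$. Pulling back and using the form identity gives $\Psi^{\ast}(f\,dV) = \frac{f}{|\nabla H|}\,dt\wedge\Psi^{\ast}dV_{t}$, and Fubini over $[a,b]\times\Sigma_{a}$, combined with the change-of-variables identity $\int_{\Sigma_{a}}\phi_{t-a}^{\ast}(\eta) = \int_{\Sigma_{t}}(\eta)$ for forms $\eta$ on $\Sigma_{t}$, produces the iterated integral (\ref{EqCoarea}). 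Equivalently, following \cite{Chavel}, one may cover each regular level by flow-box charts in which $H$ is the last coordinate, read off the factor $1/|\nabla H|$ by comparing the coordinate expressions of $dV$ and $dV_{t}$, apply Fubini chartwise, and assemble with a partition of unity.

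The main obstacle is precisely this second step, not the linear algebra of the first. Level sets may be noncompact and may change topology across critical values, so there is no single global product structure, and the flow of $X$ need neither be complete nor defined on all of $H^{-1}([a,b])$. Making the Fubini argument rigorous therefore forces one to localize away from the critical set and patch, while controlling what happens on it. Here I would invoke Sard's theorem: the critical values of $H$ form a Lebesgue-null subset of $[a,b]$, so the inner integral is defined for almost every $t$ and the excluded values contribute nothing to the outer $dt$-integral, while the corresponding sliver of $H^{-1}([a,b])$, being the preimage of a null set, is absorbed by the absolute continuity of the integral. Verifying that these pieces fit together consistently is where the real care is required.
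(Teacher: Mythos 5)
Your proposal follows essentially the same route as the paper: first the pointwise identity via an orthonormal frame adapted to the level set (your computation $dH = |\nabla H|\,\omega^{n}$ is exactly the paper's evaluation of $\frac{1}{|\nabla H|}dt\wedge dV_{t}$ on the frame $\{\nabla H/|\nabla H|,v_{1},\ldots,v_{n-1}\}$; the paper's ``$dt$'', defined by $dt(\nabla H)=|\nabla H|^{2}$ and $dt(v)=0$ for tangent $v$, is precisely your $dH$), then gradient-flow coordinates, Fubini, and Sard's theorem for the integral formula. If anything you are more explicit than the paper about the second step: the paper merely asserts that the flow ``defines smooth coordinates,'' invokes Sard to make the inner integral defined for a.e.\ $t$, and declares that equation (\ref{EqCoareaSmooth}) yields (\ref{EqCoarea}); your map $\Psi(t,p)=\phi_{t-a}(p)$ built from $X=\nabla H/|\nabla H|^{2}$, together with the flow-box-plus-partition-of-unity fallback, is the argument the paper leaves implicit, and your warnings about incompleteness of the flow and topology change of the level sets identify real obstructions that the paper does not acknowledge.

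One step of yours is genuinely false, however: the preimage of a Lebesgue-null set of values under a smooth map need not be null, so the ``sliver'' of $H^{-1}([a,b])$ lying over the critical values is not automatically absorbed by absolute continuity. Concretely, if $H$ is constant, equal to $c\in(a,b)$, on an open ball and nonconstant elsewhere, then Sard's theorem holds as always ($\{c\}$ is null in $[a,b]$), yet $H^{-1}(\{c\})$ has positive $\lambda^{n}$-measure; the left side of (\ref{EqCoarea}) sees the ball while the right side does not, since the integrand $1/|\nabla H|$ lives only on the regular part, and the formula fails. The theorem as stated thus needs the extra hypothesis that the critical \emph{set} of $H$ (not merely the set of critical values) is Lebesgue-null, or it must be rewritten in the form $\int f|\nabla H|\,dV = \int\int f\,dV_{t}\,dt$, which is insensitive to the critical set. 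To be fair, this gap is not yours alone --- the paper's proof is silent on exactly the same point --- but your proposal is the one that commits to a specific (and incorrect) reason why the critical part of the domain is harmless, so you should either add the missing hypothesis or restate the formula.
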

\begin{proof}
Let $\Phi_{t}$ be the flow of $\nabla H$ in $H^{-1}(\{t\})$. For sufficiently small $t$ and $x\in H^{-1}(\{t\})$ the flow $\Phi_{t}(x)$ defines smooth coordinates for $\mathbb{R}^{n}$. The one form $dt$ is defined as the unique one form such that
\begin{equation*}
    dt(\nabla H) = |\nabla H|^{2}
\end{equation*}
and
\begin{equation*}
    dt(v) = 0
\end{equation*}
if $v\in T_{x}H^{-1}(\{t\})$. If $\{v_{i}\}_{i=1}^{n-1}$ is an orthonormal frame for $T_{x}H^{-1}(\{t\})$ then $\left\{\frac{\nabla H}{|\nabla H|}\right\}\cup \{v_{i}\}_{i=1}^{n-1}$ is an orthonormal frame for $T_{x}\mathbb{R}^{n}$. It follows that
\begin{align*}
    \frac{1}{|\nabla H|} dt\wedge dV_{t} \left(\frac{\nabla H}{|\nabla H|},v_{1},\ldots,v_{n-1}\right) &= \frac{1}{|\nabla H|}dt\left(\frac{\nabla H}{|\nabla H|}\right) dV_{t} (v_{1},\ldots,v_{n-1})\\
    &= \frac{1}{|\nabla H|^{2}}dt(\nabla H)\\
    &= 1,
\end{align*}
hence
\begin{equation}\label{EqCoareaSmooth}
    dV = \frac{1}{|\nabla H|} dt\wedge dV_{t},
\end{equation}
as stated. For the equality of integrals, note that by Sard's Theorem the set of critical values has measure zero, hence the function
\begin{equation*}
    t \longmapsto \int \frac{f}{|\nabla H|}\;dV_{t}
\end{equation*}
is defined except for a measure zero set, hence it can be integrated and equation (\ref{EqCoareaSmooth}) yields (\ref{EqCoarea}).
\end{proof}

By the Area Formula, the last conclusion of the theorem can be rewritten as
\begin{equation}\label{EqCasiCoarea}
    \int_{H^{-1}([a,b])} f\;d\lambda^{n} = \int_{a}^{b}\int_{H^{-1}(\{t\})} \frac{f}{|\nabla H|}\;d\mathscr{H}_{n-1\leq n}\;dt,
\end{equation}
for any continuous function $f$. We wish to strengthen the result to non-continuous functions, which is what is known as the general Coarea Formula. The proof of this fact is not as clean as our proof of the Area Formula, but is still easier than the usual proof. We begin with a simple trick. If we choose $f = 1$ in (\ref{EqCasiCoarea}) we find that
\begin{equation*}
    \lambda^{n}(H^{-1}([a,b])) = \int_{a}^{b}\int_{H^{-1}(\{t\})} \frac{1}{|\nabla H|}\;d\mathscr{H}_{n-1\leq n}\;dt.
\end{equation*}
Since $H$ is any smooth function on a manifold we can consider an open set $A$ and the restriction $H\restriction_{A}\colon A \to \mathbb{R}$ and the previous equation should remain valid. Since $H\restriction_{A}^{-1}([a,b]) = H^{-1}([a,b])\cap A$ and $H\restriction_{A}^{-1}(\{t\}) = H^{-1}(\{t\})\cap A$ we find that
\begin{equation*}
    \lambda^{n}(H^{-1}([a,b])\cap A) = \int_{a}^{b}\int_{H^{-1}(\{t\})\cap A} \frac{1}{|\nabla H|}\;d\mathscr{H}_{n-1\leq n}\;dt
\end{equation*}
for any open set $A$. From now on we consider the total space to be $H^{-1}([a,b])$ to clean notation a bit. From this simple trick we find the following.

\begin{lemma}
The equation
\begin{equation}\label{EqIntegralMeasure}
    \lambda^{n}(A) = \int_{a}^{b}\int_{H^{-1}(\{t\})\cap A} \frac{1}{|\nabla H|}\;d\mathscr{H}_{n-1\leq n}\;dt
\end{equation}
is valid for any open set $A$. In consequence, the function
\begin{equation*}
    t \longmapsto \int_{H^{-1}(\{t\})\cap A} \frac{1}{|\nabla H|}\;d\mathscr{H}_{n-1\leq n}
\end{equation*}
is measurable whenever $A$ is open.
\end{lemma}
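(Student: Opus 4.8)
The plan is to extract both assertions from the smooth Coarea Formula (\ref{EqCasiCoarea}), which is already available for continuous integrands. The equation (\ref{EqIntegralMeasure}) itself is exactly the ``dumb trick'' carried out just above the statement: one applies (\ref{EqCasiCoarea}) to the restriction $H\restriction_{A}\colon A\to\mathbb{R}$ with the continuous integrand $f=1$, using that $H\restriction_{A}^{-1}([a,b]) = H^{-1}([a,b])\cap A$ and $H\restriction_{A}^{-1}(\{t\}) = H^{-1}(\{t\})\cap A$. Because $A$ is open it is itself a manifold whose volume form is the restriction of $dV$, and by the Area Formula the slice integrals against the level-set volume form agree with those against $\mathscr{H}_{n-1\leq n}$; this legitimizes the substitution and gives (\ref{EqIntegralMeasure}) for every open $A$.

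The real content is the measurability of
\begin{equation*}
    g_{A}(t) = \int_{H^{-1}(\{t\})\cap A} \frac{1}{|\nabla H|}\;d\mathscr{H}_{n-1\leq n},
\end{equation*}
which is implicitly required for the iterated integral in (\ref{EqIntegralMeasure}) to make sense. First I would approximate $\chi_{A}$ from below by continuous functions: since $A$ is open, the functions $f_{k}(x) = \min\{1,\,k\,d(x,A^{c})\}$ are continuous, satisfy $0\leq f_{k}\leq 1$, and increase pointwise to $\chi_{A}$. For each fixed $k$ the integrand $f_{k}$ is continuous, so the smooth Coarea Formula applies and guarantees that the map
\begin{equation*}
    g_{k}(t) = \int_{H^{-1}(\{t\})} \frac{f_{k}}{|\nabla H|}\;d\mathscr{H}_{n-1\leq n}
\end{equation*}
is a measurable function of $t$, defined off the measure-zero set of critical values furnished by Sard's Theorem.

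Next I would pass to the limit. On each regular level set the object $\frac{1}{|\nabla H|}\,\mathscr{H}_{n-1\leq n}$ is a positive measure, so the Monotone Convergence Theorem applied slice by slice gives $g_{k}(t)\nearrow g_{A}(t)$ pointwise in $t$; since a pointwise limit of measurable functions is measurable, $g_{A}$ is measurable, which is the desired consequence. The same monotone limit reproves the equation independently of the dumb trick, since Monotone Convergence on the left sends $\int_{H^{-1}([a,b])} f_{k}\;d\lambda^{n}$ to $\lambda^{n}(A)$ while Monotone Convergence in the $t$-variable sends $\int_{a}^{b} g_{k}(t)\;dt$ to $\int_{a}^{b} g_{A}(t)\;dt$, and (\ref{EqCasiCoarea}) equates these at every stage.

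I expect the main obstacle to be the justification that integration against $\mathscr{H}_{n-1\leq n}$ on a level set is genuinely integration against a positive measure, so that the Monotone Convergence Theorem may be invoked slice by slice, together with the bookkeeping identifying the level sets of $H\restriction_{A}$ with $H^{-1}(\{t\})\cap A$. The interchange of the two monotone limits is harmless, since every quantity involved is non-negative.
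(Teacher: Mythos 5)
Your proposal is correct, and its first half is exactly the paper's own proof: the paper obtains equation (\ref{EqIntegralMeasure}) precisely by the ``dumb trick'' of applying (\ref{EqCasiCoarea}) with $f=1$ to the restriction $H\restriction_{A}\colon A\to\mathbb{R}$, using $H\restriction_{A}^{-1}([a,b]) = H^{-1}([a,b])\cap A$ and $H\restriction_{A}^{-1}(\{t\}) = H^{-1}(\{t\})\cap A$; the lemma in the paper carries no separate proof beyond this. Where you genuinely diverge is on the measurability claim. The paper never argues it: measurability of $t\mapsto \int_{H^{-1}(\{t\})\cap A}\frac{1}{|\nabla H|}\,d\mathscr{H}_{n-1\leq n}$ is treated as implicit in the smooth Coarea Formula applied to $H\restriction_{A}$ (an iterated integral asserted to be valid must have a measurable inner integrand), which is why the lemma says only ``in consequence.'' Your argument --- approximating $\chi_{A}$ from below by the continuous functions $f_{k}=\min\{1,k\,d(\cdot,A^{c})\}$, invoking the smooth formula only for continuous integrands to get measurability of each $g_{k}$, then applying the Monotone Convergence Theorem slice by slice to conclude $g_{k}\nearrow g_{A}$ pointwise at regular values --- is an honest proof of that claim, and as you observe it even re-derives (\ref{EqIntegralMeasure}) without ever restricting $H$ to $A$. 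What your route buys: it uses the smooth Coarea Formula only on the original space, rather than trusting its full statement (including the implicit a.e.-definedness and measurability of the slice function) on every open submanifold, so it is more self-contained and patches the weakest step of the paper's smooth proof (``defined except for a measure zero set, hence it can be integrated''). What the paper's route buys is brevity. The one hypothesis your slice-by-slice MCT needs --- that $\mathscr{H}_{n-1\leq n}$ restricted to a regular level set is a positive measure and $|\nabla H|>0$ there --- holds by the paper's construction, since regular level sets are $(n-1)$-dimensional submanifolds on which $\mathscr{H}_{n-1\leq n}$ restricts to the volume measure; so your argument goes through.
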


Using the usual approximation techniques by simple functions and the Monotone Convergence Theorem, the Coarea Formula will be proven if we can show that equation (\ref{EqIntegralMeasure}) is valid for all Borel sets. To this end, we will show that the right-hand side of (\ref{EqIntegralMeasure}) actually defines a Borel measure. That is, if we define
\begin{equation*}
    \mu(E) = \int_{a}^{b}\int_{H^{-1}(\{t\})\cap E} \frac{1}{|\nabla H|}\;d\mathscr{H}_{n-1\leq n}\;dt,
\end{equation*}
we will show that $\mu$ is a Borel measure. The previous lemma then implies that the open sets are in the domain of $\mu$ and that $\lambda^{n}(A) = \mu(A)$ for open sets (note that both measures are $\sigma$-finite since they coincide in open sets). Since two Borel measures that coincide on open sets are actually the same measure, it will follow that $\lambda^{n} = \mu$ and the Coarea Formula will follow.

The previous proofs will be carried out by the usual Dynkin System techniques, so we recall the pertinent concepts and results. We say that a family of sets $\Sigma$ is a \textbf{Dynkin system} if $\Omega\in\Sigma$, $\Sigma$ is closed under increasing countable unions and if $A,B\in\Sigma$ and $A\subset B$ then $B\setminus A\in\Sigma$. The intersection of any number of Dynkin systems is again a Dynkin system, hence given any family of subsets $\mathcal{A}$ there exists a minimal Dynkin system containing $\mathcal{A}$ denoted by $\mathcal{D}(\mathcal{A})$. The \textbf{Dynkin System Theorem} guarantees that if $\mathcal{A}$ is closed under finite intersections then $\mathcal{D}(\mathcal{A}) = \sigma(\mathcal{A})$. We now state the Coarea Formula and prove it.

\begin{theorem}[Coarea Formula. Particular Case.]
Let $H\colon \mathbb{R}^{n} \to \mathbb{R}$ be a smooth function. For any integrable function $f$ the equation
\begin{equation*}
    \int_{H^{-1}([a,b])} f\;d\lambda^{n} = \int_{a}^{b}\int_{H^{-1}(\{t\})} \frac{f}{|\nabla H|}\;d\mathscr{H}_{n-1\leq n}\;dt
\end{equation*}
is valid.
\end{theorem}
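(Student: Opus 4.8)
The plan is to carry out the program announced just before the statement: to show that the right-hand side, viewed as the set function
\[
\mu(E) = \int_{a}^{b}\int_{H^{-1}(\{t\})\cap E}\frac{1}{|\nabla H|}\;d\mathscr{H}_{n-1\leq n}\;dt,
\]
is a Borel measure on the total space $H^{-1}([a,b])$ which agrees with $\lambda^{n}$ on open sets, and then to promote the identity $\lambda^{n}=\mu$ from characteristic functions to all integrable $f$ by the usual approximation argument.

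First I would extend the measurability supplied by the preceding lemma from open sets to all Borel sets. Write $g_{E}(t)=\int_{H^{-1}(\{t\})\cap E}\frac{1}{|\nabla H|}\,d\mathscr{H}_{n-1\leq n}$. Fixing a bounded open ball $U$, I consider the family $\mathcal{D}$ of Borel sets $E$ for which $t\mapsto g_{E\cap U}(t)$ is measurable. Since $V\cap U$ is open for open $V$, the lemma shows that $\mathcal{D}$ contains all open sets, and these are closed under finite intersections. Because $\int_{a}^{b}g_{U}\,dt=\lambda^{n}(U)<\infty$, one has $g_{U}(t)<\infty$ for almost every $t$; this finiteness is exactly what is needed for the proper-difference axiom, since for $A\subset B$ in $\mathcal{D}$ it gives $g_{(B\setminus A)\cap U}=g_{B\cap U}-g_{A\cap U}$ almost everywhere, hence measurable. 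Closure under increasing unions follows from the Monotone Convergence Theorem applied to the Hausdorff integral. The Dynkin System Theorem then gives $\mathcal{D}\supseteq\mathcal{D}(\text{open})=\sigma(\text{open})$, i.e. $\mathcal{D}$ contains every Borel set, so $g_{E\cap U}$ is measurable for every Borel $E$. Letting $U$ increase through a sequence of balls exhausting $H^{-1}([a,b])$ and invoking monotone convergence once more, $g_{E}$ itself is measurable, so $\mu$ is well defined on Borel sets.

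Next I would verify that $\mu$ is a measure and identify it with $\lambda^{n}$. Countable additivity reduces, for a disjoint Borel family $(E_{k})$, to the identity $g_{\bigsqcup_{k}E_{k}}(t)=\sum_{k}g_{E_{k}}(t)$, which is countable additivity of $\mathscr{H}_{n-1\leq n}$ slice by slice; interchanging the sum with the $dt$-integral, all integrands being nonnegative, gives $\mu(\bigsqcup_{k}E_{k})=\sum_{k}\mu(E_{k})$, and $\mu(\emptyset)=0$ is clear, so $\mu$ is a Borel measure. By equation (\ref{EqIntegralMeasure}) it agrees with $\lambda^{n}$ on the open sets, which are closed under finite intersections and generate the Borel $\sigma$-algebra, and both measures are $\sigma$-finite since they coincide on the exhausting balls. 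Consequently $\mu=\lambda^{n}$ on all Borel sets. Unwinding the definitions, this equality read for $E=A$ is precisely the asserted formula with $f=\chi_{A}$; linearity extends it to nonnegative simple functions, the Monotone Convergence Theorem applied on both sides extends it to every $f\in M^{+}$, and writing $f=f^{+}-f^{-}$ delivers it for every integrable $f$.

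I expect the crux of the argument to be the measurability extension, not the closing approximation. The proper-difference step of the Dynkin system genuinely requires the slice integrals $g_{E}(t)$ to be finite, and they need not be so globally because $H^{-1}([a,b])$ may be unbounded; the localization to bounded balls $U$ of finite measure, followed by the monotone passage $U\nearrow H^{-1}([a,b])$, is the maneuver that repairs this and is the point most deserving of care.
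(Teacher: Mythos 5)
Your proposal follows the same strategy as the paper's proof: show that $\mu$ is a Borel measure via a Dynkin system argument on the slice functions, identify it with $\lambda^{n}$ through agreement on open sets plus $\sigma$-finiteness, and finish with the standard simple-function approximation. The one place you diverge is the proper-difference step, and there your version is the more careful one. The paper simply writes $F_{B\setminus A} = F_{B} - F_{A}$, but this identity only determines $F_{B\setminus A}$ where $F_{A}(t) < \infty$: slicewise additivity gives $F_{B} = F_{B\setminus A} + F_{A}$, and at points where $F_{A}(t) = \infty$ the subtraction is $\infty - \infty$, so measurability of $F_{B\setminus A}$ cannot be deduced from that of $F_{B}$ and $F_{A}$. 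This is not a hypothetical worry: for $H(x) = x_{1}$ every slice $H^{-1}(\{t\})$ is a hyperplane of infinite $\mathscr{H}_{n-1\leq n}$-measure, so the slice functions are identically $\infty$ on the unlocalized sets. Your maneuver of restricting to a bounded open ball $U$, where $\int_{a}^{b} g_{U}\,dt = \lambda^{n}(U) < \infty$ forces $g_{U}(t) < \infty$ for a.e.\ $t$ and makes the subtraction legitimate almost everywhere (yielding Lebesgue measurability of $g_{(B\setminus A)\cap U}$, which is all the $dt$-integration needs), followed by the monotone exhaustion $U \nearrow H^{-1}([a,b])$, repairs exactly this gap; it also supplies the $\sigma$-finiteness along a sequence of sets in the generating $\pi$-system that the uniqueness-of-measures step quietly relies on. So your proof is correct, it is structurally the paper's proof, and its extra localization is not redundant caution but a fix the paper's own argument actually needs.
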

\begin{proof}
By the previous discussion, it is enough to show that $\mu$ is a Borel measure. First, for any set $E$ denote by $F_{E}(t)$ the function
\begin{equation*}
    t \longmapsto \int_{H^{-1}(\{t\})\cap E} \frac{1}{|\nabla H|}\;d\mathscr{H}_{n-1\leq n}.
\end{equation*}
By definition of $\mu$, we have that $\mu(E)$ is defined whenever $F_{E}$ is measurable. We define
\begin{equation*}
    \Sigma = \{E\subset\Omega\;|\;F_{E} \text{ is measurable}\}.
\end{equation*}
By the previous lemma we have that $\tau \subset \Sigma$, where $\tau$ is the topology of our space. We claim that $\Sigma$ is a Dynkin system, which, if true, would imply that $\mathcal{D}(\tau)\subset\Sigma$. Since topologies are closed under finite intersections, we actually have that $Bo(\tau) = \mathcal{D}(\tau)$ by the Dynkin System Theorem, hence $Bo(\tau) \subset \Sigma$ and $\mu$ would be defined on all Borel sets.

It is clear that $\Omega\in\Sigma$. Let $(A_{m})_{m\in\mathbb{N}}$ be an increasing sequence in $\Sigma$, then $F_{A_{m}}$ is measurable for any $m\in\mathbb{N}$. By the Monotone Convergence Theorem we have that
\begin{equation*}
    F_{\bigcup\limits_{m\in\mathbb{N}}A_{m}} = \lim_{m\to\infty}F_{A_{m}},
\end{equation*}
where the limit is pointwise. This implies that $F_{\bigcup\limits_{m\in\mathbb{N}}A_{m}}$ is measurable and $\bigcup\limits_{m\in\mathbb{N}}A_{m}\in\Sigma$. Consider $A,B\in\Sigma$ such that $A\subset B$. We have that
\begin{equation*}
    F_{B\setminus A} = F_{B} - F_{A}
\end{equation*}
and is thus measurable. This implies that $B\setminus A\in\Sigma$ and shows that $\Sigma$ is a Dynkin system. The previous considerations shows that $\mu$ is defined on $Bo(\tau)$.

The proof that $\mu$ is a measure is actually rather easy, as the only non-trivial part is showing $\sigma$-additivity. This is done by using the Monotone Convergence Theorem twice, once for each integral, the first time on increasing sets and the second one on a series of non-negative functions. It follows that $\mu$ is a Borel measure that coincides with $\lambda^{n}$ in open sets and hence $\lambda^{n} = \mu$. Finally, one approximates any integrable function with an increasing sequence of simple functions and uses the Monotone Convergence Theorem to conclude.
\end{proof}

This last theorem can be strengthened even more using the Rademacher Theorem to replace smoothness on $H$ by Lipschitz continuity, but we will not do so.

\subsection{Comparison with Hausdorff measure}\label{SubsecComparación}

We have seen that our measure $\mathcal{H}_{m\leq n}$ satisfies the Area and Coarea Formulas, bearing similitude with classical Hausdorff measure. In \cite{Federer} many other measures are constructed that satisfy the Area Formula, all of them defined in much larger $\sigma$-algebras than the Borel $\sigma$-algebra. We now study the relation between $\mathcal{H}_{m\leq n}$ and the classical Hausdorff measure, which we will denote as $H_{m\leq n}$. First note that due to the Area Formula we do now that $\mathcal{H}_{m\leq n}$ and $H_{m\leq n}$ coincide on all Borel subsets contained on an $m$-dimensional submanifold. To obtain finer comparisons we need to find better formulas for $\mathcal{H}_{m\leq n}$.

By definition, for any Borel set $A$ we have that
\begin{equation*}
    \mathcal{H}_{m\leq n}(A) = \lim_{\tilde{M}}\mu_{\tilde{M}}(A).
\end{equation*}
Since the net is increasing, we can rewrite this as
\begin{equation*}
    \mathcal{H}_{m\leq n}(A) = \sup_{\tilde{M}}\mu_{\tilde{M}}(A).
\end{equation*}
Recalling the definition of $\tilde{M}$ we find that
\begin{equation*}
    \mathcal{H}_{m\leq n}(A) = \sup\left\{\sum_{i = 1}^{k}\mu_{M_{i}}(A)\;\bigg|\;M_{i}\text{ disjoint $m$-dimensional submanifolds}\right\}.
\end{equation*}
Recalling the concept of summable net, we can also write this as
\begin{equation*}
    \mathcal{H}_{m\leq n}(A) = \sup\left\{\sum_{i \in I}\mu_{M_{i}}(A)\;\bigg|\;M_{i}\text{ disjoint $m$-dimensional submanifolds}\right\},
\end{equation*}
where $I$ is an arbitrary index set. Finally, we can also chose the $M_{i}$ to be compact submanifolds with boundary that intersect at most at the boundary, whether the sum is finite or not.

\begin{proposition}
$\mathcal{H}_{m\leq n} \leq H_{m\leq n}\restriction_{Bo(\mathbb{R}^{n})}$.
\end{proposition}
\begin{proof}
Let $A$ be a Borel set. If $H_{m\leq n}(A) = \infty$ then the inequality is immediate, hence we may assume that $H_{m\leq n}(A) < \infty$. We start from the formula
\begin{equation*}
    \mathcal{H}_{m\leq n}(A) = \sup\left\{\sum_{i = 1}^{k}\mu_{M_{i}}(A)\;\bigg|\;M_{i}\text{ disjoint $m$-dimensional submanifolds}\right\},
\end{equation*}
in which we need only consider compact submanifolds with boundary. Inner regularity of $H_{n\leq m}$ implies that
\begin{equation*}
    H_{m\leq n}(A) = \sup\{H_{m\leq n}(K)\;|\;K\subset A, \;K\text{ compact}\}.
\end{equation*}
Since $H_{m\leq n}(A\cap M) = \mu_{M}(A) = \mathcal{H}_{m\leq n}(A\cap M)$ we have that
\begin{equation*}
    \mathcal{H}_{m\leq n}(A) \leq H_{m\leq n}(A).
\end{equation*}
\end{proof}

The previous result uses only two fundamental facts about the Hausdorff measure:
\begin{enumerate}
    \item $H_{m\leq n}$ satisfies the Area Formula.
    \item $H_{m\leq n}$ is inner regular in sets of finite measure.
\end{enumerate}
Hence, we actually proved a minimality property for $\mathcal{H}_{m\leq n}$.

\begin{corollary}
The inductive volume measure $\mathcal{H}_{m\leq n}$ is the smallest Borel measure that satisfies the Area Formula and is inner regular in finite measure sets.
\end{corollary}

It follows from the previous results that $H_{m\leq n}$ null sets are also null for $\mathcal{H}_{m\leq n}$. From this we can finally obtain that $\mathcal{H}_{m\leq n}$ and $H_{m\leq n}$ coincide on a sufficiently large class of sets.

\begin{theorem}
Let $A$ be a Borel set. In the following cases the measures $\mathcal{H}_{m\leq n}$ and $H_{m\leq n}$ coincide:
\begin{enumerate}
    \item $A$ is contained in an $m$-dimensional subamifold (or a countable collection thereof).
    \item $A$ is a rectifiable set.
\end{enumerate}
\end{theorem}

The fact that both measures agree on rectifiable sets shows that for many analytical purposes the measure $\mathcal{H}_{m\leq n}$ is strong enough to be used in place of $H_{m\leq n}$. As previously stated, this is not enough to study irregular sets and dimension, however, this limitation was already present from the fact the domain of $\mathcal{H}_{m\leq n}$ is only the Borel sets. Hence, the fact that $\mathcal{H}_{m\leq n}$ and $H_{m\leq n}$ agree on rectifiable sets is more than enough if the intended use is the study of rectifiable sets or functions on rectifiable sets. Still, we develop further conditions for both measures to be equal based on covering results. 

Recall that if $A$ is a set and $\mathcal{V}$ is a cover of $A$, then $\mathcal{V}$ is a \textbf{Vitali class} if for each $x\in A$ and $\delta > 0$ there exists $U\in\mathcal{V}$ such that $x\in U$ and $0 < diam\;U < \delta$. The classical Vitali Covering Theorem for Hausdorff measure is the following (see \cite{Falconer}):

\begin{theorem}
Let $A$ be a $H_{m\leq n}$ measurable subset of $\mathbb{R}^{n}$ and $\mathcal{V}$ a Vitali class of closed sets that covers $A$. There exists a disjoint sequence $(U_{i})_{i\in\mathbb{N}}$ in $\mathcal{V}$ such that either
\begin{equation*}
    H_{m\leq n}\left(A\setminus\bigcup_{i\in\mathbb{N}}U_{i}\right) = 0
\end{equation*}
or
\begin{equation*}
    \sum_{i\in\mathbb{N}}(diam\;U_{i})^{m} = \infty.
\end{equation*}
\end{theorem}

In order to employ the previous results one usually needs to show that the second condition does not happen, hence the first condition must be true and a similar conclusion to the classical Vitali Covering Theorem can be achieved. Many general covering results can be found in \cite{Guzman1} and \cite{Guzman2}. With this we can prove the following.

\begin{theorem}
Let $A$ be a Borel set. If for every disjoint sequence of manifolds $(M_{k})_{k\in\mathbb{N}}$ the series
\begin{equation*}
    \sum_{k\in\mathbb{N}}diam(M_{k}\cap A)^{m}
\end{equation*}
is finite then $\mathcal{H}_{m\leq n}(A) = H_{m\leq n}(A)$.
\end{theorem}
\begin{proof}
Let $\mathcal{V}$ be the class of intersections of submanifolds of finite diameter with $A$. Then $\mathcal{V}$ is a covering of $A$ such that each element in $\mathcal{V}$ is a subset of $A$. Since we are allowing the elements of $\mathcal{V}$ to have arbitrarily small diameter, it is also a Vitali class, (except for isolated points, which may be removed without altering the value of either measure). The previous theorem implies that there is a disjoint sequence $\{C_{i}\}_{i\in\mathbb{N}}$ in $\mathcal{V}$ such that
\begin{equation*}
    H_{m\leq n}\left(A\setminus\bigcup_{i\in\mathbb{N}}U_{i}\right) = 0.
\end{equation*}
From the facts that $H_{m\leq n}(A) < \infty$, the elements of the sequence are disjoint and each $C_{i}\subset A$, we actually have that
\begin{equation*}
    H_{m\leq n}(A) = \sum_{i\in\mathbb{N}}H_{m\leq n}\left(C_{i}\right).
\end{equation*}
Using $\sigma$-additivity, the fact that $H_{m\leq n}(C_{i}) = \mathcal{H}_{m\leq n}(C_{i})$ and that $C_{i}\subset A$ we find that
\begin{align*}
    H_{m\leq n}(A) &= \sum_{i\in\mathbb{N}}H_{m\leq n}(C_{i})\\
    &= \sum_{i\in\mathbb{N}}\mathcal{H}_{m\leq n}(C_{i})\\
    &= \mathcal{H}_{m\leq n}\left(\bigcup_{i\in\mathbb{N}}C_{i}\right)\\
    &\leq \mathcal{H}_{m\leq n}(A).
\end{align*}
Since the opposite inequality is always verified, we conclude equality.
\end{proof}

\subsection{Riemannian Setting}\label{SecRiemannian}

The previous results carry on to the case where the euclidean space $\mathbb{R}^{n}$ is replaced by a riemannian manifold $S$. This is due to the geometric nature of our construction. The only noteworthy difference is in the proof of lemma \ref{LemaIntersección}, where the induced metric on the submanifold $N$ and $M$ is no longer given by
\begin{equation*}
    g^{N} = D\varphi_{M}^{t} D\varphi_{M}
\end{equation*}
but by
\begin{equation*}
    g^{N} = D\varphi_{M}^{t}\; g^{S}\; D\varphi_{M}.
\end{equation*}
The rest of the proof follows in the same way. All other proofs are the same as well. Hence, the inductive volume measure can be defined in riemannian manifolds and all previouss results remain valid.

\subsection{Concluding Remarks}

The inductive volume measure constructed in this paper is insufficient for studying irregular sets and Hausdorff dimension, but has many advantages including its simplicity when compared to Hausdorff measure and its minimality property. The same construction can be done on a riemannianan manifold without major differences and the results remain the same. This provides an alternative to Hausdorff measures in manifolds, which are much more difficult than that for euclidean spaces. Although we only proved the Area and Coarea Formulas, due to their complexity, many other results form Geometric Measure Theory can be proved much more easily, such as the Poincaré and Sobolev inequalities, the Gauss-Green Theorem, and Isoperimetric Inequalities. Other results related to Sobolev spaces such as the existence of trace and extension operators can be proved as well. In any case, the techniques of this paper provide a solid framework to develop further results related to geometric integration and serve as a solid introduction to these kinds of topics.

\section*{Acknowledgements}

\noindent \textbf{Data Availability} 
Data sharing is not applicable to this article as no new data were created or analyzed in this study.
\\

\noindent
\textbf{Declarations}\\

\noindent
\textbf{Conflict of interest} The authors declare that they have no conflict of interest.

\nocite{*}
\bibliographystyle{apsrev4-2}
\bibliography{main}
\addcontentsline{toc}{section}{Bibliography}

\end{document}